\documentclass[twoside]{amsart}
\usepackage{mathrsfs}
\usepackage{amsfonts}
\usepackage{amssymb}
\usepackage{amssymb}
\usepackage{amssymb}
\usepackage{amssymb}
\usepackage{amssymb}
\usepackage{amssymb}
\usepackage{amssymb}
\usepackage{amssymb}
\usepackage{amsmath}
\usepackage{mathdots}
\usepackage{xcolor}
\usepackage[all]{xy}

\usepackage{lastpage}

\RequirePackage{amsmath} \RequirePackage{amssymb}
\usepackage{amscd,latexsym,amsthm,amsfonts,amssymb,amsmath,amsxtra}
\usepackage[colorlinks=true, urlcolor=blue, linkcolor=red,anchorcolor=blue,citecolor=blue]{hyper ref}


     
    \newcommand{\BC}{{\mathbb {C}}}




     \newcommand{\CH}{{\mathcal {H}}}

     \newcommand{\CP}{{\mathcal {P}}}
     
    \newcommand{\CS}{{\mathcal {S}}}



    \newcommand{\lenth}{{\mathrm {\lenth}}}

    \newcommand{\Hom}{{\mathrm{Hom}}} 
    \newcommand{\Ind}{{\mathrm{Ind}}} \newcommand{\ind}{{\mathrm{ind}}}

    \theoremstyle{plain}

    \newtheorem{thm}{Theorem}[section] \newtheorem{corollary}[thm]{Corollary}
    \newtheorem{lemma}[thm]{Lemma}


    \numberwithin{equation}{section}

\usepackage[top=1in, bottom=1in, left=1.25in, right=1.25in]{geometry}

\begin{document}
\title{On parabolic induction and Jacquet module over p-adic group}

\begin{abstract}
In this note, using tensor products with appropriate bimodules over Hecke algebras, we uniformly describe parabolic induction and Jacquet module. We also recover a result of Loke and Przebinda on construction of big theta lift in local theta correspondence using similar description.
\end{abstract}

	\author{Jingsong Chai}
	\address{School of Mathematics, Physics and Finance \\ Anhui Polytechnic University \\Wuhu, Anhui,  241000\\China}
	\email{jingsongchai@hotmail.com}

	\subjclass[2010]{22E50}
	\keywords{Parabolic induction, Jacquet module, Local theta correspondence, Hecke algebra}
	\thanks{The author is supported by a start up funding of AHPU and NSFC 12571010.}

	\maketitle
\section{Introduction}

Constructing new representations from given old representations is very important in representation theory. Parabolic(compact) induction, Jacquet module and local theta correspondence are three widely used important methods for constructing representations over p-adic groups. In this note, we give a uniform description for these three construction methods, as tensor product over Hecke algebras with appropriate bimodules.

We begin with the case of finite groups. Let $G$ be a finite group with a subgroup $H$. If $(\sigma, W)$ is a finite dimensional representation of $H$, then we may induce $\sigma$ from $H$ to $G$ to get
\[
\Ind(\sigma):=\{ f:G\to W | f(hg)=\sigma(h)f(g) \}.
\]
This induced representation $\Ind(\sigma)$ can also be described as
\[
\Ind(\sigma)=\BC[G]\otimes_{\BC[H]} W,
\]
where $\BC[G]$ (resp. $\BC[H]$) denotes the group algebra of $G$(resp. $H$).

Now for p-adic groups, things are becoming a little more complicated. We replace the group algebra $\BC[G]$ by the Hecke algebra $\CH(G)$, which consists of smooth compactly supported functions on $G$. Now let $H$ be a closed subgroup of $G$. If $(\sigma, W)$ is a smooth representation of $H$, then we can form the compactly indcued representation $\ind_H^G(\sigma)$. By Theorem \ref{Compactly}, roughly up to modular characters, $\ind_H^G(\sigma)$ can be described as $\CH(G) \otimes_{\CH(H)} W$.

In particular, let $H=P$ be a parabolic subgroup of $G$ with Levi decomposition $P=MN$. In this case, consider the space $\CH(G/N)$, which can be viewed as a left $\CH(G)$,right $\CH(M)$ bimodule. Our first result Corollary \ref{Parabolic} states that tensoring with this bimodule over Hecke algebra $\CH(H)$ gives us the usual parabolic induction, that is, $\Ind_P^G(\sigma \otimes \delta_P^{1/2})\cong \CH(G/N)\otimes_{\CH(H)} W$.

Now if we consider $\CH(N\backslash G)$, which is a left $\CH(M)$, right $\CH(G)$ bimodule. For a smooth representation $(\pi, V)$ of $G$, we show that tensoring with this bimodule gives us the Jacquet module $J_N(\pi)$ with respect to $N$, that is, $J_N(\pi)\otimes \delta_P^{-1}\cong \CH(N\backslash G)\otimes_{\CH(G)} V$.

In \cite{LP}, Loke and Przebinda showed that the big theta in local theta correspondence can also be described as tensor product over Hecke algebras with some bimodule constructed from Weil representations. We recover their result as a consequence of Lemma \ref{identify}.

So in this way, we uniformly describe parabolic induction, Jacquet module and local theta correspondence as tensoring with some appropriate bimodules over Hecke algebras.

For tempered representations of real reductive groups, similar constructions are given by Clare in \cite{C} using $C^*$ algebras.

\section{Hecke algebra}
Let $F$ a p-adic field, and $G$ be a locally compact Hausdorff totally disconnected topological group with right Haar measure $\mu$. Let $\CH(G)$ be the space of locally constant compactly supported functions from $G$ to $\BC$. For $f_1,f_2\in \CH(G)$, we define their convolution as
\[
(f_1*f_2)(g)=\int_G f_1(gh^{-1})f_2(h)d\mu(h).
\]
This makes $\CH(G)$ to be an associative algebra, which we call Hecke algebra of $G$. If $(\pi, V)$ is a smooth representation of $G$, then we can view it as a module over $\CH(G)$ via
\[
\pi(f).v:=\int_G f(g)\pi(g)vdg,
\]
and we know this establishes an equivalence between the category of smooth representations of $G$ and the category of nondegenerate modules of $\CH(G)$.

Let $K$ be an open compact subgroup of $G$, define the function $e_K(g)$ as follows
$$
e_K(g)=\left\{\begin{array}{lll} \frac{1}{\mu(K)}, & g\in K, \\ 0, & g\notin K. \end{array} \right.
$$
Then $e_K\in \CH(G)$ and it is an idempoent element. For finitely many elements $f_1,...,f_n$ in $\CH(G)$, there exits some $e_K$ such that
$e_K*f_i=f_i*e_K=f_i$ for all $i, 1\le i\le n$.

A left module $V$ of $\CH(G)$ is called nondegenerate if $\CH(G)V=V$. Let $\CS$ be a right $\CH(G)$-module. We will define the tensor product $\CS \otimes_{\CH(G)} V$ over $\CH(G)$ as follows. First we form the tensor product $\CS \otimes V$. Let $N$ be the subspace of $\CS \otimes V$ spanned by the elements of the form
\[
sf\otimes v-s\otimes fv, \forall f\in \CH(G), s\in \CS, v\in V.
\]
We then define
\[
\CS \otimes_{\CH(G)} V:= (\CS \otimes V)/N.
\]
We use $s\otimes v$ to denote its image in $\CS \otimes_{\CH(G)} V$.

\begin{lemma}
Let $V$ be a left nondegenerate $\CH(G)$-module. Then we have a natural isomorphism
\[
\CH(G) \otimes_{\CH(G)} V \cong V.
\]
\end{lemma}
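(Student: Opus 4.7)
The plan is to construct the obvious multiplication map and then exploit the existence of the idempotents $e_K$ to show it is an isomorphism.

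First, I would define the map
\[
\mu : \CH(G) \otimes_{\CH(G)} V \lra V, \qquad f \otimes v \lto \pi(f).v.
\]
Bilinearity of the convolution action, together with the fact that $(f_1 * f_2).v = f_1.(f_2.v)$ (i.e., $V$ is a genuine $\CH(G)$-module), ensures that $\mu$ descends to the quotient by $N$, so it is well-defined on the tensor product over $\CH(G)$.

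Surjectivity is immediate from nondegeneracy: any $v \in V$ equals $\CH(G).V = V$, so $v = \sum f_i . v_i$ for finitely many $f_i \in \CH(G)$ and $v_i \in V$, and then $v = \mu\bigl(\sum f_i \otimes v_i\bigr)$.

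The main point is injectivity, and this is where the idempotent trick recalled just before the lemma is essential. Given a tensor $t = \sum_{i=1}^n f_i \otimes v_i$ with $\mu(t) = \sum f_i.v_i = 0$, I choose an open compact subgroup $K \subset G$ such that $e_K * f_i = f_i$ for every $i$; such a $K$ exists because $\{f_1, \dots, f_n\}$ is a finite family. Then in $\CH(G) \otimes_{\CH(G)} V$ we may compute
\[
t \;=\; \sum_{i} (e_K * f_i) \otimes v_i \;=\; \sum_{i} e_K \otimes (f_i . v_i) \;=\; e_K \otimes \Bigl(\sum_i f_i.v_i\Bigr) \;=\; e_K \otimes 0 \;=\; 0,
\]
where the second equality is precisely the defining relation for the tensor product over $\CH(G)$. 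Hence $\ker \mu = 0$ and $\mu$ is an isomorphism.

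I do not expect any real obstacle: both directions are formal once the idempotent $e_K$ is in hand, and the naturality of $\mu$ is obvious from its definition. The only subtlety to watch is to ensure that the same $K$ can be chosen uniformly for the finitely many $f_i$ appearing in a given representative, which is exactly what the finite-stabilization property of the $e_K$'s supplies.
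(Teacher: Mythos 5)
Your proof is correct. The essential mechanism — extracting a common idempotent $e_K$ that is a left unit for the finitely many $f_i$'s appearing in a representative — is the same local-units trick the paper uses, but you organize it differently: you define only the multiplication map $\mu$ and prove it is both surjective (trivially, from nondegeneracy) and injective (by moving the $e_K$ across the tensor sign to collapse $\sum f_i\otimes v_i$ to $e_K\otimes \sum f_i.v_i = 0$), whereas the paper constructs an explicit two-sided inverse $\psi(v) = f\otimes v'$ for any chosen $f,v'$ with $fv'=v$ and uses the same idempotent argument to verify that $\psi$ is independent of the choice. Your version is a bit more economical because one never has to build or well-define an inverse; the paper's version has the mild advantage of producing the inverse explicitly. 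Both proofs rest on the same facts stated just before the lemma, and either is a complete argument.
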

\begin{proof}
We define $\phi: \CH(G) \otimes_{\CH(G)} V \to V$ by $\phi(s\otimes v)=sv$. One check that $\phi$ is well-defined on $\CH(G) \otimes_{\CH(G)} V$. To define its inverse, for $v\in V$, there exists $f\in \CH(G), v'\in V$ such that $fv'=v$. We then define $\psi: V\to \CH(G) \otimes_{\CH(G)} V$ by $\psi(v)=f\otimes v'$. Now if there exist $f_1, f_2\in \CH(G), v_1, v_2\in V$ such that $f_1v_1=v, f_2v_2=v_2$, we want to check $f_1\otimes v_1=f_2\otimes v_2$ in $\CH(G) \otimes_{\CH(G)} V$. First note that there exists $f''\in \CH(G)$ such that $f''*f_1=f_1*f''=f_1, f''*f_2=f_2*f''=f_2$. It follows that
\[
f_1\otimes v_1=f''*f_1\otimes v_1=f''\otimes f_1v_1=f''\otimes v.
\]
Similarly, $f_2\otimes v_2=f''\otimes v$, hence $f_1\otimes v_1=f_2\otimes v_2$. So $\psi$ is well defined.

One then check that $\phi,\psi$ are inverses to each other, and thus $\CH(G) \otimes_{\CH(G)} V\cong V$.
\end{proof}

Let $H$ be another locally compact Hausdorff totally disconnected topological group. Now assume $\CS$ is also a left $\CH(H)$-module. If it satisfies $\phi(sf)=(\phi s)f$ and for all $f\in \CH(G), \phi\in \CH(H), s\in S$, then we say $\CS$ is a $\CH(H)$-$\CH(G)$-bimodule. For $s\otimes v\in \CS \otimes_{\CH(G)} V$, we can define
\[
\phi.(s\otimes v)=\phi s\otimes v,  \forall \phi\in \CH(H).
\]
One can check this is well defined and makes $\CS \otimes_{\CH(G)} V$ a left $\CH(H)$-module. Similarly, if $V$ has a $\CH(G)$-$\CH(H)$-bimodule structure, we can make $\CS \otimes_{\CH(G)} V$ to be a right $\CH(H)$-module.

\begin{lemma}
If $V_1$ is a right $\CH(H)$-module, $V_2$ is a $\CH(H)$-$\CH(G)$-bimodule, and $V_3$ is a left $\CH(G)$-module, then we have a natural isomorphism
\[
(V_1\otimes_{\CH(H)} V_2)\otimes_{\CH(G)} V_3 \cong V_1\otimes_{\CH(H)}(V_2\otimes_{\CH(G)} V_3 ).
\]
\end{lemma}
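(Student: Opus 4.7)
The plan is to produce mutually inverse $\BC$-linear maps $\Phi$ and $\Psi$ between the two iterated tensor products and verify that the balancing relations force them to descend to the respective quotients. This is the standard associativity argument for bimodule tensor products, adapted to the (non-unital) Hecke algebra setting; no nondegeneracy hypotheses are needed because only the universal property of the quotient is used.

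First I would construct $\Phi:(V_1\otimes_{\CH(H)} V_2)\otimes_{\CH(G)} V_3 \to V_1\otimes_{\CH(H)}(V_2\otimes_{\CH(G)} V_3)$ step by step via the universal property. For each fixed $v_3\in V_3$, the assignment $(v_1,v_2)\mapsto v_1\otimes (v_2\otimes v_3)$ is $\BC$-bilinear and $\CH(H)$-balanced: using the right $\CH(H)$-action on $V_1$ and the bimodule structure of $V_2$, one has
\[
(v_1\phi)\otimes (v_2\otimes v_3)=v_1\otimes \phi\cdot(v_2\otimes v_3)=v_1\otimes (\phi v_2\otimes v_3)
\]
for $\phi\in\CH(H)$, where the middle equality uses the $\CH(H)$-module structure on $V_2\otimes_{\CH(G)}V_3$ coming from the left $\CH(H)$-action on the first factor. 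Hence it factors through $V_1\otimes_{\CH(H)}V_2$, giving a map $T_{v_3}:V_1\otimes_{\CH(H)}V_2\to V_1\otimes_{\CH(H)}(V_2\otimes_{\CH(G)}V_3)$. Letting $v_3$ vary produces a $\BC$-bilinear map $(V_1\otimes_{\CH(H)}V_2)\times V_3\to V_1\otimes_{\CH(H)}(V_2\otimes_{\CH(G)}V_3)$, and I would check it is $\CH(G)$-balanced on generators:
\[
((v_1\otimes v_2)f)\otimes v_3=(v_1\otimes v_2 f)\otimes v_3\mapsto v_1\otimes(v_2 f\otimes v_3)=v_1\otimes(v_2\otimes fv_3),
\]
which is the image of $(v_1\otimes v_2)\otimes fv_3$. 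Hence $\Phi$ is well defined on the quotient, with $\Phi((v_1\otimes v_2)\otimes v_3)=v_1\otimes(v_2\otimes v_3)$.

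The map $\Psi$ in the opposite direction is built symmetrically: for each fixed $v_1$, the map $(v_2,v_3)\mapsto (v_1\otimes v_2)\otimes v_3$ is $\CH(G)$-balanced (trivially, from the $\CH(G)$-action on $V_3$ and the right $\CH(G)$-action on $V_2$), descends to $V_2\otimes_{\CH(G)}V_3$, and the resulting pairing $V_1\times (V_2\otimes_{\CH(G)}V_3)\to (V_1\otimes_{\CH(H)}V_2)\otimes_{\CH(G)}V_3$ is $\CH(H)$-balanced, so it descends to $\Psi(v_1\otimes(v_2\otimes v_3))=(v_1\otimes v_2)\otimes v_3$. Finally, $\Phi$ and $\Psi$ send generators to generators and are inverse on them, hence on the whole spaces by $\BC$-linearity.

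The only mildly delicate point — and the one I would write out most carefully — is the justification of the intermediate step $v_1\otimes \phi\cdot(v_2\otimes v_3)=v_1\otimes(\phi v_2\otimes v_3)$: this is exactly the definition of the left $\CH(H)$-module structure on $V_2\otimes_{\CH(G)}V_3$ induced by the bimodule structure of $V_2$, which was recorded in the paragraph preceding the statement. Beyond this, everything is a routine balancing check, so no substantial obstacle is expected.
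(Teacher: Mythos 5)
The paper states this lemma without proof, so there is no "paper's argument" to compare against; it is evidently intended as a routine fact. Your proposal is correct and is the standard associativity argument: since $\otimes_{\CH(H)}$ and $\otimes_{\CH(G)}$ are defined in the paper purely as quotients of the $\BC$-tensor product by balancing relations, both iterated tensor products are naturally identified with $(V_1\otimes V_2\otimes V_3)/R$ where $R$ is spanned by $v_1\phi\otimes v_2\otimes v_3-v_1\otimes\phi v_2\otimes v_3$ and $v_1\otimes v_2 f\otimes v_3-v_1\otimes v_2\otimes f v_3$, and your maps $\Phi,\Psi$ realize this identification on generators. Your remark that no nondegeneracy is needed is accurate: the construction never appeals to approximate identities such as $e_K$, only to the universal property of quotients of $\BC$-vector spaces, so the lemma holds at the level of the raw (possibly degenerate) modules. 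One stylistic point worth writing out explicitly, if you include this in a final version, is the bilinearity-in-$v_3$ of the assembled pairing $(V_1\otimes_{\CH(H)}V_2)\times V_3\to V_1\otimes_{\CH(H)}(V_2\otimes_{\CH(G)}V_3)$: it is clear on simple tensors and then extends by linearity in the first variable, but a reader may want to see that sentence. Otherwise no obstacle.
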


Now assume $\CS$ is a $\CH(H)$-$\CH(G)$-bimodule and $V$ is a left $\CH(H)$-module, consider $\Hom_{\CH(H)}(S, V)$. For $\phi\in \Hom_{\CH(H)}(S, V)$, $f\in \CH(G)$, we define $f\phi:S \to V$ by $(f\phi)(s)=\phi(sf)$. One can check $f\phi \in \Hom_{\CH(H)}(S, V)$, and this makes $\Hom_{\CH(H)}(S, V)$ to be a left $\CH(G)$-module.

\begin{lemma}
If $\CS$ is a $\CH(H)$-$\CH(G)$-bimodule, $V$ is a left $\CH(G)$-module, and $W$ is a left $\CH(H)$-module, then we have a natural isomorphism
\[
\Hom_{\CH(H)}(\CS\otimes_{\CH(G)} V, W)\cong \Hom_{\CH(G)} (V, \Hom_{\CH(H)}(\CS, W) ).
\]
\end{lemma}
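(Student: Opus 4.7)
The plan is to write down the standard tensor-hom adjunction explicitly, using the module structures defined just before the statement. Given a $\CH(H)$-linear map $\Phi: \CS\otimes_{\CH(G)} V \to W$, I would set
\[
\tilde\Phi: V \to \Hom_{\CH(H)}(\CS,W), \qquad \tilde\Phi(v)(s) := \Phi(s\otimes v).
\]
Conversely, given a $\CH(G)$-linear map $\Psi:V\to \Hom_{\CH(H)}(\CS,W)$, I would first define $\bar\Psi:\CS\otimes V\to W$ on the uncompleted tensor product by $\bar\Psi(s\otimes v):=\Psi(v)(s)$, and then check it descends to $\CS\otimes_{\CH(G)}V$.

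The bulk of the proof is then a sequence of routine verifications that I would arrange in this order. First, I would show $\tilde\Phi(v)$ is $\CH(H)$-linear in $s$ using that $\Phi$ is, and that $\tilde\Phi$ is $\CH(G)$-linear in $v$ using the relation $sf\otimes v = s\otimes fv$ in $\CS\otimes_{\CH(G)}V$; explicitly,
\[
\tilde\Phi(fv)(s)=\Phi(s\otimes fv)=\Phi(sf\otimes v)=\tilde\Phi(v)(sf)=(f\tilde\Phi(v))(s),
\]
by the definition of the $\CH(G)$-action on $\Hom_{\CH(H)}(\CS,W)$ given in the paragraph preceding the lemma. Symmetrically, to descend $\bar\Psi$ through $N$ I would verify $\Psi(v)(sf)=(f\Psi(v))(s)=\Psi(fv)(s)$, and then check $\CH(H)$-linearity of the descended map.

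Finally, I would check the two assignments $\Phi\mapsto\tilde\Phi$ and $\Psi\mapsto\bar\Psi$ are mutually inverse by a direct evaluation on elementary tensors $s\otimes v$ and on elements $v\in V$, and note that they are manifestly natural in all three variables. I do not anticipate a genuine obstacle here: this is the standard hom-tensor adjunction, and the only point that needs care is checking well-definedness on the quotient $(\CS\otimes V)/N$, which is handled precisely by the compatibility between the left $\CH(H)$-action on $\Hom_{\CH(H)}(\CS,W)$ and the $\CH(G)$-action introduced via $(f\phi)(s)=\phi(sf)$.
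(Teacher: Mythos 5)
Your proposal is correct and is exactly the paper's argument: both define the forward map by $\Phi\mapsto\bigl(v\mapsto(s\mapsto\Phi(s\otimes v))\bigr)$ and the inverse by $\Psi\mapsto\bigl(s\otimes v\mapsto\Psi(v)(s)\bigr)$, then verify linearity, well-definedness on the quotient, and mutual inversion. You simply spell out the verifications that the paper leaves as ``one can check,'' and your identification of the key compatibility $(f\phi)(s)=\phi(sf)$ as the crux of well-definedness is the right observation.
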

\begin{proof}
For $\gamma \in \Hom_{\CH(H)}(\CS\otimes_{\CH(G)} V, W)$, we define $\phi(\gamma)$ by setting $\phi(\gamma)(v)(s)=\gamma (s\otimes v)$. Then one can check $\phi(\gamma) \in \Hom_{\CH(G)} (V, \Hom_{\CH(H)}(\CS, W) )$. Conversely, for $\delta\in \Hom_{\CH(G)} (V, \Hom_{\CH(H)}(\CS, W) )$, we define $\psi(\delta)(s\otimes v)=\delta (v)(s)$. One can check $\psi(\delta)$ is well defined and belongs to $\Hom_{\CH(H)}(\CS\otimes_{\CH(G)} V, W)$. Then both $\phi$ and $\psi$ are inverses to each other.
\end{proof}

As $\CS$ is a right $\CH(G)$-module, we can view it as a representation of $G$ acting on the right. Similarly, a left $\CH(G)$-module $V$ can be viewed as a representation of $G$ acting from the left. We use $N_1$ to denote the subspace of $\CS \otimes V$ spanned by elements of the form
\[
s.g\otimes v-s\otimes g.v,  \forall s\in \CS, v\in V, g\in G,
\]
and use $N_2$ to denote the subspace spanned by elements of the form
\[
s\otimes v-s.g^{-1}\otimes g.v, \forall s\in \CS, v\in V, g\in G.
\]
Put
\[
\CS \otimes_G V := (\CS \otimes V)/N_1, (\CS \otimes V)_G:=(\CS \otimes V)/N_2.
\]

\begin{lemma}
\label{identify} If both $\CS$ and $V$ are smooth representations of $G$, we have equalities
\[
\CS \otimes_{\CH(G)} V = \CS \otimes_G V = (\CS \otimes V)_G.
\]
\end{lemma}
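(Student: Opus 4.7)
The plan is to identify all three quotients by showing that their defining relation subspaces $N$, $N_1$, $N_2$ of $\CS\otimes V$ coincide. The equality $N_1=N_2$ is formal: substituting $s\mapsto s.g^{-1}$ in a generator $s.g\otimes v - s\otimes g.v$ of $N_1$ produces $s\otimes v - s.g^{-1}\otimes g.v$, a generator of $N_2$, and the reverse substitution $s\mapsto s.g$ in a generator of $N_2$ yields a generator of $N_1$. So it remains to prove $N=N_1$, and this is exactly where smoothness enters.

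For $N_1\subseteq N$, given a generator $s.g\otimes v - s\otimes g.v$, I would pick a compact open subgroup $K$ of $G$ such that $s.K=s$ and $K\subseteq gK_v g^{-1}$, where $K_v$ is an open subgroup fixing $v$ on the left (so that $K$ fixes $g.v$). Setting $\psi_g=\frac{1}{\mu(K)}\mathbf{1}_{Kg}\in\CH(G)$, a direct integration using right-invariance of the Haar measure gives $s.\psi_g = s.g$ and $\psi_g.v = g.v$, and therefore
\[
s.g\otimes v - s\otimes g.v \;=\; s.\psi_g\otimes v - s\otimes \psi_g.v \;\in\; N.
\]

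For $N\subseteq N_1$, given a generator $sf\otimes v - s\otimes fv$ with $f\in\CH(G)$, I would first pick a compact open $K_0$ with $s.K_0=s$ and with $f$ left $K_0$-invariant, then refine to a smaller compact open subgroup
\[
K \;\subseteq\; K_0 \cap \Bigl(\bigcap_{g\in\supp(f)} gK_v g^{-1}\Bigr);
\]
the latter intersection contains an open subgroup because $K_v$ is open, $\supp(f)$ is compact, and conjugation is continuous. Writing $f=\sum_i c_i\mathbf{1}_{Kg_i}$ as a finite sum over left $K$-cosets meeting $\supp(f)$, the same integrations give $sf=\mu(K)\sum_i c_i\, s.g_i$ and $fv=\mu(K)\sum_i c_i\, g_i.v$, whence
\[
sf\otimes v - s\otimes fv \;=\; \mu(K)\sum_i c_i\bigl(s.g_i\otimes v - s\otimes g_i.v\bigr)\;\in\; N_1.
\]
The main obstacle is exhibiting a single $K$ satisfying all of the conditions simultaneously; this rests on the standard fact that for any compact $C\subset G$ and open subgroup $K_v$, the intersection $\bigcap_{g\in C} gK_v g^{-1}$ contains an open subgroup, so that once $K_0$ is fixed by (i) and (ii), shrinking further to incorporate (iii) does not break the previous invariance properties.
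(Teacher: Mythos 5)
Your proof is correct and shares the paper's overall strategy of showing $N=N_1=N_2$; the arguments for $N_1=N_2$ and for $N_1\subseteq N$ are essentially the same as the paper's. The one genuine difference is in the inclusion $N\subseteq N_1$. The paper picks a compact open $K$ fixing only $s$ and $v$, takes $f=\textbf{1}_{KgK}$, decomposes $KgK$ simultaneously into right cosets $Kgk_i$ (to evaluate $s.\textbf{1}_{KgK}$) and into left cosets $k_j'gK$ (to evaluate $\textbf{1}_{KgK}.v$), and then telescopes the resulting mismatched sums back into $N_1$-generators. You instead shrink $K$, using compactness of $\supp(f)$ and continuity of conjugation, so that $K$ also fixes each $g_i.v$; then a single coset decomposition $f=\sum_i c_i\textbf{1}_{Kg_i}$ gives $sf=\mu(K)\sum_i c_i\,s.g_i$ and $fv=\mu(K)\sum_i c_i\,g_i.v$ over the same index set, and $sf\otimes v - s\otimes fv$ is visibly a linear combination of $N_1$-generators with no telescoping. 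Both work; yours is shorter and structurally cleaner, at the small cost of invoking the standard fact that $\bigcap_{g\in C} g K_v g^{-1}$ contains a compact open subgroup when $C$ is compact.
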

\begin{proof}
We will show $N_1=N_2=N$. Since
\[
s.g\otimes v-s\otimes g.v=s.g\otimes v-(s.g).g^{-1}\otimes g.v
\]
and
\[
s\otimes v- s.g^{-1}\otimes g.v= (s.g^{-1}).g\otimes v -s.g^{-1}\otimes g.v,
\]
we find that $N_1=N_2$.

Now let $K$ be a compact open subgroup of $G$ fixing both $s$ and $v$. Take $f=\textbf{1}_{KgK}$ to be the characteristic function of $KgK$. Write $KgK=\coprod Kgk_i$ and $KgK=\coprod k_j'gK$ with $k_i,k_j'\in K$, and we have
\begin{eqnarray*}
&& s.\textbf{1}_{KgK} \otimes v- s\otimes \textbf{1}_{KgK}.v \\
&=& \sum_i s.\textbf{1}_{Kgk_i} \otimes v - \sum_j s\otimes \textbf{1}_{k_j'gK}.v  \\
&=& \mu(K) \sum_i s.gk_i \otimes v -  \sum_j \mu(k_j'gK) s\otimes k_j'g.v  \\
&=& \mu(K) \sum_i (s.gk_i \otimes v - s.g\otimes k_i.v+s.g\otimes k_i.v) +  \sum_j \mu(k_j'gK)(-s.k_j'\otimes g.v+ s.k_j'\otimes g.v- s\otimes k_j'g.v)  \\
&=& \mu(K) \sum_i (s.gk_i \otimes v - s.g\otimes k_i.v )+ \mu(KgK)(s.g\otimes v- s\otimes g.v ) + \sum_j \mu(k_j'gK)(s.k_j'\otimes g.v- s\otimes k_j'g.v) \\
\end{eqnarray*}
which shows that $N \subset N_1$.

For the other direction, for given $g\in G, s\in S, v\in V$, we find some compact open subgroup $K$ fixing $s$ and $gv$. Then
\begin{eqnarray*}
&& sg\otimes v- s\otimes gv \\
&=& \frac{1}{\mu(K)} (s \textbf{1}_{Kg}\otimes v -s\otimes \textbf{1}_{Kg}v +s\otimes \textbf{1}_{Kg}v )-s\otimes gv  \\
&=& \frac{1}{\mu(K)} (s \textbf{1}_{Kg}\otimes v -s\otimes \textbf{1}_{Kg}v  )
\end{eqnarray*}
which shows that $N_1 \subset N$, and thus $N=N_1$.
\end{proof}

$\bullet$ In local theta correspondence, the big theta lift is usually defined in the form $(\CS\otimes V)_G$ (see for example \cite{GKT}) with some bimodule $\CS$ from Weil representation. The above result implies that it is equal to $\CS\otimes_G V$, which recovers Theorem 3 in \cite{LP}.

\section{Compactly induced representation}
Assume $H$ is a closed subgroup of $G$, and use $\delta_H, \delta_G$ to denote the modular character of $H$ and $G$ respectively. That is, for $f\in \CH(G)$, we have
\[
\int_G f(g^{-1}x)dx = \delta_G(g)\int_G f(x)dx,
\]
and similarly for $\delta_H$.
For $f, f'\in \CH(G), \phi\in \CH(H)$, we define
\[
(f\phi)(g):=\int_H f(gh^{-1})\phi(h)dh, \ \ (f'f)(g):=(f'*f)(g).
\]
One can check this makes $\CH(G)$ to be a $\CH(G)$-$\CH(H)$-bimodule. Let $K$ be a compact open subgroup of $G$, and $\Xi_K$ be a set of representatives of double cosets $K\backslash G/ H$. For $\xi\in \Xi_K$, let $\textbf{1}_{K\xi}$ be the characteristic function of $K\xi$.

\begin{lemma}
\label{generator} If $f\in \CH(G)$ which is bi-$K$-invariant, then there exist finitely many $\xi_i \in \Xi_K, h_i\in H$, such that
\[
f=\sum_i R(h_i).\textbf{1}_{K\xi_i},
\]
where $R(h_i)$ denotes the right translation by $h_i$. In particular, $\CH(G)$ is a nondegenerate right $\CH(H)$-module.
\end{lemma}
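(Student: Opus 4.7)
The plan is to reduce the first claim to the double coset decomposition $G = \bigsqcup_{\xi \in \Xi_K} K \xi H$ combined with the observation that any compactly supported left-$K$-invariant function on $G$ is a finite $\BC$-linear combination of characteristic functions of left $K$-cosets. Concretely, I would first note that $\supp(f)$ is a finite disjoint union $\bigsqcup_\ell K g_\ell$ and that $f$ takes constant scalar values $c_\ell \in \BC$ on each $K g_\ell$, so $f = \sum_\ell c_\ell \textbf{1}_{K g_\ell}$. I would then use the double coset decomposition to pick, for each $\ell$, a unique $\xi_\ell \in \Xi_K$ and some $h_\ell \in H$ with $g_\ell = \xi_\ell h_\ell$, whence $\textbf{1}_{K g_\ell} = \textbf{1}_{K \xi_\ell h_\ell} = R(h_\ell^{-1}) \textbf{1}_{K \xi_\ell}$. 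This displays $f$ in the stated form (with implicit complex scalars $c_\ell$, which I take to be the intended reading of the formula).

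For the nondegeneracy assertion, the key observation is that every $f' \in \CH(G)$ is bi-$K$-invariant for some sufficiently small compact open $K \subseteq G$, and so by the first part it lies in the $\BC$-span of the elements $R(h) \textbf{1}_{K \xi}$. It therefore suffices to exhibit each such $R(h) \textbf{1}_{K \xi}$ as a product $f'' \phi$ with $f'' \in \CH(G)$ and $\phi \in \CH(H)$. I would take $f'' = \textbf{1}_{K \xi}$ and $\phi = \frac{1}{\mu(U)} \textbf{1}_{U h^{-1}}$, where $U := \xi^{-1} K \xi \cap H$ is an open subgroup of $H$ (using that $H$ is closed in $G$), and then verify $\textbf{1}_{K \xi} \phi = R(h) \textbf{1}_{K \xi}$ directly from the formula $(f'' \phi)(g) = \int_H f''(g h'^{-1}) \phi(h') \, d h'$ introduced just before the lemma.

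No serious obstacle is expected. The one place requiring care is convention matching in the last verification: one must invoke the right invariance of the Haar measure on $H$ in the substitution $h' = u h^{-1}$ and then use $\xi u \xi^{-1} \in K$ for $u \in U$ to collapse $\textbf{1}_{K \xi}(g h u^{-1})$ to $\textbf{1}_{K \xi}(g h)$. Apart from this bookkeeping, the argument is elementary once the double coset decomposition is in hand.
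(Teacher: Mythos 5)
Your argument for the first assertion is essentially the same as the paper's: both proofs hinge on the double coset decomposition $G = \bigsqcup_{\xi\in\Xi_K} K\xi H$. The paper reduces to $f = \textbf{1}_{KgK}$, writes $KgK = \coprod_i Kgk_i$ and then $gk_i = k_i'\xi_i h_i$ to get $\textbf{1}_{Kgk_i} = R(h_i^{-1})\textbf{1}_{K\xi_i}$; you instead decompose $\supp(f)$ directly into left $K$-cosets and absorb the $K$-factor from $g_\ell = k\xi_\ell h_\ell$ into the coset $Kg_\ell = K\xi_\ell h_\ell$. These are the same computation in slightly different packaging. Your remark that complex scalars $c_\ell$ must be understood as implicit in the displayed formula is a fair reading and actually corrects a small imprecision in the statement as written (the paper's own reduction to $\textbf{1}_{KgK}$ only handles a single double-coset characteristic function; general bi-$K$-invariant $f$ is a $\BC$-linear combination of these).

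For the nondegeneracy claim, the paper offers no argument beyond ``In particular,'' so you are supplying a verification the paper leaves implicit. Your computation with $\phi = \frac{1}{\mu(U)}\textbf{1}_{Uh^{-1}}$, $U = \xi^{-1}K\xi\cap H$, is correct: substituting $h' = uh^{-1}$ and using right invariance of the Haar measure on $H$, one gets $\textbf{1}_{K\xi}(ghu^{-1})$, and writing $u = \xi^{-1}k\xi$ with $k\in K$ shows $ghu^{-1}\xi^{-1} = gh\xi^{-1}k^{-1}$, which lies in $K$ iff $gh\in K\xi$. So $\textbf{1}_{K\xi}\cdot\phi = R(h)\textbf{1}_{K\xi}$ as you claim. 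One remark worth making: nondegeneracy admits an even shorter proof that bypasses the decomposition of the first part entirely. For any $f\in\CH(G)$ which is right-$K$-invariant, the element $e_{K\cap H} := \frac{1}{\mu_H(K\cap H)}\textbf{1}_{K\cap H}\in\CH(H)$ (noting $K\cap H$ is compact open in $H$ since $H$ is closed) satisfies $(f\cdot e_{K\cap H})(g) = \frac{1}{\mu_H(K\cap H)}\int_{K\cap H} f(gh^{-1})\,dh = f(g)$, since $h^{-1}\in K$ for $h\in K\cap H$. Thus $f = f\cdot e_{K\cap H} \in \CH(G)\CH(H)$ directly. Your route is a bit more work but has the advantage of explicitly exhibiting generators, which is in the spirit of the first part of the lemma. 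Either way, your proof is correct.
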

\begin{proof}
Without loss of generality, we may assume $f=\textbf{1}_{KgK}$ for some $g\in G$. We first note that there exist finitely many $k_i \in K$, such that $KgK= \coprod_i Kgk_i$. Now for each $i$, there exists a $\xi_i \in \Xi_K$, such that $gk_i\in K\xi_i H$. This means $gk_i=k_i'\xi_ih_i$ for some $k_i'\in K, h_i\in H$. Then
\[
\textbf{1}_{Kgk_i}=\textbf{1}_{Kk_i'\xi_ih_i}=R(h_i^{-1}).\textbf{1}_{K\xi_i},
\]
and the lemma follows.
\end{proof}

For a smooth representation $(\sigma, W)$ of $H$, view it as a left $\CH(H)$-module. We consider the space of functions $f:G\to W$ satisfying the following conditions:

(1). $f(hg)=\sigma (h)f(g)$ for all $h\in H, g\in G$;

(2). $f$ is smooth;

(3). There exists a compact subset $C$ of $G$, such that $f$ is supported on $HC$.

We use $\ind_H^G(\sigma)$ to denote the space of such functions, and $G$ acts on it by right translations. We call it compactly induced representation. Take $f\otimes v \in \CH(G)\otimes_{\CH(H)} W, g\in G$, define
\[
\alpha(f\otimes w)(g):=\frac{1}{\delta_G(g)} \int_H f(g^{-1}h^{-1})\sigma(h^{-1})w\delta_H(h^{-1}) dh.
\]
One can check $\alpha (f\otimes w)\in \ind_H^G(\sigma \otimes \frac{\delta_H}{\delta_G})$, and we have the following result due to Garret (\cite{Ga}). See also Theorem III 2.6 in \cite{Re}.

\begin{thm}
\label{Compactly}
The above map $\alpha$ induces an isomorphism
\[
\alpha: \CH(G) \otimes_{\CH(H)} W \to \ind_H^G (\sigma \otimes \frac{\delta_H}{\delta_G}).
\]
\end{thm}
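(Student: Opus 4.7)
The plan is to verify well-definedness and equivariance of $\alpha$ by direct calculation, and then to construct an inverse by invoking Lemma~\ref{generator}.

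First, to see that $\alpha$ descends through the balancing relation, I would unfold the definitions $(f\phi)(g) = \int_H f(gh^{-1})\phi(h)\,dh$ and $\phi\cdot w = \int_H \phi(h)\sigma(h)w\,dh$ inside the formula for $\alpha(f\phi \otimes w)$, and apply Fubini together with a change of variable in $h$ to match $\alpha(f \otimes \phi\cdot w)$. To show that $\alpha(f\otimes w)$ lies in $\ind_H^G(\sigma \otimes \delta_H/\delta_G)$, the $H$-covariance
\[
\alpha(f\otimes w)(h_0 g) = \sigma(h_0)\frac{\delta_H(h_0)}{\delta_G(h_0)}\,\alpha(f\otimes w)(g)
\]
follows from the substitution $h \mapsto h h_0$ and the defining identities for $\delta_G$ and $\delta_H$. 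Smoothness is inherited from the local constancy of $f$, the support is contained in $\supp(f)^{-1}\cdot H$ and so has compact image in $H\backslash G$, and right $G$-equivariance is a similar direct computation.

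For surjectivity and injectivity, the key observation is that by Lemma~\ref{generator} the source $\CH(G)\otimes_{\CH(H)} W$ is spanned, once we fix an open compact $K \subset G$, by the simple tensors $\textbf{1}_{K\xi}\otimes w$ with $\xi \in \Xi_K$. A direct computation shows that $\alpha(\textbf{1}_{K\xi}\otimes w)$ is supported on the single double coset $H\xi^{-1}K$ and
\[
\alpha(\textbf{1}_{K\xi}\otimes w)(\xi^{-1}) = \delta_G(\xi)\int_{H \cap \xi^{-1}K\xi}\sigma(h^{-1})w\,\delta_H(h^{-1})\,dh.
\]
On the other side, any $F$ in the target is right-$K$-invariant for some $K$ and supported on a finite union of such double cosets, so $F$ is determined by a finite collection of values $F(\xi_i^{-1})$. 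One then defines an inverse $\beta$ by sending $F$ to the rescaled finite sum $\sum_i c_{K,\xi_i}\, \textbf{1}_{K\xi_i}\otimes F(\xi_i^{-1})$, with constants $c_{K,\xi_i}$ chosen to invert the normalization in the displayed formula above.

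The main obstacle is showing $\beta$ is well-defined, i.e., independent of the choice of $K$. Passing to a smaller open compact $K' \subset K$ refines each coset $H\xi^{-1}K$ into finitely many $H(\xi')^{-1}K'$, and one must match $\textbf{1}_{K\xi}\otimes F(\xi^{-1})$ with the corresponding sum $\sum_j \textbf{1}_{K'\xi'_j}\otimes F((\xi'_j)^{-1})$ modulo the relations defining the tensor product over $\CH(H)$. Writing $\textbf{1}_{K\xi}$ as a sum of right $\CH(H)$-translates $\sum R(h_j)\textbf{1}_{K'\xi'_j}$ via Lemma~\ref{generator} moves the discrepancy onto the second factor, where the $H$-covariance of $F$ absorbs it. Once $\beta$ is well-defined, the identities $\alpha\beta = \id$ and $\beta\alpha = \id$ reduce to checks on the spanning tensors and on the support decomposition of $F$.
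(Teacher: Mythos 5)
Your first two steps (well-definedness of $\alpha$ and $\CH(G)$-equivariance) coincide with the paper's. For the bijectivity, however, you take a genuinely different route: you propose to write down an explicit inverse $\beta(F)=\sum_i c_{K,\xi_i}\,\textbf{1}_{K\xi_i}\otimes F(\xi_i^{-1})$ and show it is well defined, whereas the paper argues surjectivity and injectivity separately. For surjectivity the paper produces, for each $K$ and $\xi\in\Xi_K$, the explicit preimage $f=\frac{1}{\delta_G}\frac{1}{\mu_H(H\cap\xi^{-1}K\xi)}\textbf{1}_{K\xi}$ of the elementary function $F$ supported on $H\xi^{-1}K$ with $F(\xi^{-1})=w$, and then invokes a lemma from Bushnell--Henniart that such $F$'s span $\ind_H^G$; for injectivity it refines $K$ to a smaller $K'$, spreads a nonzero tensor across distinct $H\backslash G/K'$ double cosets, and checks the images $\alpha(\textbf{1}_{K'y\xi_i}\otimes w_i)$ are nonzero and supported on pairwise disjoint sets, hence linearly independent. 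Your one-shot inverse is cleaner in structure, and your displayed value $\alpha(\textbf{1}_{K\xi}\otimes w)(\xi^{-1})=\delta_G(\xi)\int_{H\cap\xi^{-1}K\xi}\sigma(h^{-1})w\,\delta_H(h^{-1})\,dh$ is correct (and simplifies to $\delta_G(\xi)\mu_H(H\cap\xi^{-1}K\xi)\,w$ since the compact group $H\cap\xi^{-1}K\xi$ fixes $F(\xi^{-1})$ and both modular characters are trivial on it). The price you pay is that well-definedness of $\beta$ under $K\rightsquigarrow K'\subset K$ is the hard part and you have only sketched it: when you expand $\textbf{1}_{K\xi}=\sum R(h_y^{-1})\textbf{1}_{K'\xi'_y}$ and move each $R(h_y^{-1})$ across the tensor sign, each term picks up $\sigma(h_y)F(\xi^{-1})=\frac{\delta_G(h_y)}{\delta_H(h_y)}F((\xi'_y)^{-1})$ by the twisted $H$-covariance, and you then have to verify that these $\delta_G/\delta_H$ factors, summed over the $y$ landing in a given $K'$-$H$ double coset, exactly convert $c_{K,\xi}$ into $c_{K',\xi'_j}$; this uses $\delta_G(\xi'_j)=\delta_G(\xi)/\delta_G(h_y)$ and $\mu_H(h_y A h_y^{-1})=\delta_H(h_y)\mu_H(A)$ in a way you should carry out explicitly, since it is precisely the content that the paper's separate injectivity argument is designed to circumvent. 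One small slip: the support of $\alpha(f\otimes w)$ lies in $H\cdot\supp(f)^{-1}$, not $\supp(f)^{-1}\cdot H$ (the $H$-covariance is on the left).
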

\begin{proof}
We first show that $\alpha$ is well defined. For $\phi\in \CH(H), f\in \CH(G), w\in W$,
\begin{eqnarray*}
\alpha(f\phi \otimes w)(g) &=&\frac{1}{\delta_G(g)} \int_H (f\phi)(g^{-1}h^{-1})\sigma(h^{-1})w\delta_H(h^{-1})dh \\
&=& \frac{1}{\delta_G(g)} \int_H \int_H f(g^{-1}h^{-1} h'^{-1})\phi(h') dh' \sigma(h^{-1})w\delta_H(h^{-1})dh
\end{eqnarray*}
and
\begin{eqnarray*}
\alpha(f\otimes \phi w)(g) &=& \frac{1}{\delta_G(g)} \int_H f(g^{-1}h^{-1})\sigma(h^{-1})(\phi w)\delta_H(h^{-1}) dh \\
&=& \frac{1}{\delta_G(g)} \int_H f(g^{-1}h^{-1}) \sigma(h^{-1}) \int_H \phi(h')\sigma(h')w dh'\delta_H(h^{-1}) dh \\
&=& \frac{1}{\delta_G(g)} \int_H \int_H f(g^{-1}h^{-1} h'^{-1})\phi(h') dh' \sigma(h^{-1})w\delta_H(h^{-1})dh
\end{eqnarray*}
which shows that $\alpha(f\phi \otimes w)=\alpha(f\otimes \phi w)$, hence $\alpha$ is well defined on $\CH(G) \otimes_{\CH(H)} W$.

Next we show $\alpha$ respects the $\CH(G)$-module structure on both sides. For $f'\in \CH(G)$, we have
\begin{eqnarray*}
(f'.\alpha(f\otimes w))(g)&=& \int_G f'(g') (g'.\alpha(f\otimes w))(g)dg' \\
&=& \int_G f'(g') \frac{1}{\delta_G(gg')} \int_H f(g'^{-1}g^{-1}h^{-1}) \sigma(h^{-1})w\delta_H(h^{-1}) dh.
\end{eqnarray*}
While
\begin{eqnarray*}
\alpha(f'f\otimes w) &=& \frac{1}{\delta_G(g)} \int_H (f'f)(g^{-1}h^{-1})\sigma(h^{-1})\delta_H(h^{-1}) dh  \\
&=& \frac{1}{\delta_G(g)} \int_H \int_G f'(g^{-1}h^{-1}g'^{-1})f(g') dg' \sigma(h^{-1})\delta_H(h^{-1}) dh  \\
&=& \frac{1}{\delta_G(g)} \int_H \int_G f'(g'^{-1}) f(g'g^{-1}h^{-1}) dg' \sigma(h^{-1})\delta_H(h^{-1}) dh \\
&=& (f'.\alpha(f\otimes w))(g)
\end{eqnarray*}
where the last equality follows from
\[
\int_G f(g'^{-1})dg'=\int_G f(g')\delta_G(g'^{-1}) dg',  \forall f\in \CH(G).
\]

Now we show that $\alpha$ is surjective. Let $K$ be a compact open subgroup of $G$, and recall that $\Xi_K$ is a set of representatives of double cosets $K\backslash G/H$. Then $\Xi_K^{-1}$ is a set of representatives of double cosets $H\backslash G/K$. For $\xi \in \Xi_K$, we use $W^{H \cap \xi^{-1} K \xi}$ to denote the vectors in $W$ fixed under $H \cap \xi^{-1} K \xi$ by the action of $\sigma\otimes \frac{\delta_H}{\delta_G}$. Choose $w\in W^{H \cap \xi K \xi^{-1}}$. Let $F\in (\ind_H^G(\sigma \otimes \frac{\delta_H}{\delta_G}))^K$ be the unique element supported on $H\xi^{-1} K$ and such that $F(\xi^{-1})=w$. Choose $f=\frac{1}{\delta_G}\frac{1}{\mu_H (H\cap \xi^{-1} K \xi)}\textbf{1}_{K\xi}$, where $\mu_H$ is the right Haar measure on $H$. Then direct computation shows that $\alpha(f\otimes w) \in (\ind_H^G(\sigma \otimes \frac{\delta_H}{\delta_G}))^K$ and $\alpha(f\otimes w)(\xi^{-1})=w$.
This implies $\alpha(f\otimes w)=F$. By Lemma 1 in section 3.5 in \cite{BH}, $\alpha$ is surjective.

Finally we show that $\alpha$ is injective. Given a nonzero element $f\otimes w \in \CH(G) \otimes_{\CH(H)} W$, assume $f$ is bi-$K$-invariant, then by Lemma \ref{generator}, $f=f=\sum_i R(h_i).\textbf{1}_{K\xi_i}$, where $\xi_i \in \Xi_K$. Hence
\[
f\otimes w=\sum_i \textbf{1}_{K\xi_i} \otimes \sigma(h_i)w=\sum_i \textbf{1}_{K\xi_i} \otimes w_i
\]
in $\CH(G)\otimes_{\CH(H)} W$.

Assume $K_i$ is a compact open subgroup of $G$ such that $K_i \cap H$ fixes $w_i$ under $\sigma\otimes \delta_H$. For each $x\in K$, there exists a compact open subgroup $K(x)\subset K$, such that $\xi_i^{-1}x^{-1} K(x) x\xi \subset K_i$. Since $\bigcup K(x)x =K$, there exists finitely many with $\bigcup K(x_j)x_j=K$. Take $K'= \cap_j K(x_j)$, and write $K=\coprod K'y$ as left cosets decomposition. Then for each $y$, $y\in K(x_j)x_j$ for some $j$, thus there exists some $k_j\in K(x_j)$ with $y=k_jx_j$. Hence
\[
\xi_i^{-1}y^{-1} K' y\xi_i \subset \xi_i^{-1}x_j^{-1}k_j^{-1} K(x_j) k_jx_j\xi_i \subset K_i.
\]

Now $\textbf{1}_{K\xi_i}=\sum_y \textbf{1}_{K'y\xi_i}$. We compute $\alpha (\textbf{1}_{K'y\xi_i} \otimes w_i)(\xi_i^{-1}y^{-1})$ as
\begin{eqnarray*}
 && \alpha (\textbf{1}_{K'y\xi_i} \otimes w_i)(\xi_i^{-1}y^{-1}) \\
 &=& \frac{1}{\delta_G(\xi_i^{-1}y^{-1})}\int_H \textbf{1}_{K'y\xi_i} (y\xi_i h^{-1})\sigma(h^{-1})w_i \delta_H(h^{-1}) dh \\
 &=& \frac{1}{\delta_G(\xi_i^{-1}y^{-1})} \mu_H(H \cap \xi_i^{-1}y^{-1}K'y\xi_i) w_i.
\end{eqnarray*}

This shows that $\alpha (\textbf{1}_{K'y\xi_i} \otimes w_i)$ is nonzero. Note that $\alpha (\textbf{1}_{K'y\xi_i} \otimes w_i)$ is supported on $H\xi_i^{-1}y^{-1}K'$, which implies that $\{\alpha (\textbf{1}_{K'y\xi_i} \otimes w_i) \}$ are linearly independent. This shows that $\alpha(f\otimes w)$ is nonzero, which implies that $\alpha$ is injective.

\end{proof}

\begin{corollary}
\label{Cor}
Let $f\in \CH(G)$. If for all $g\in G$,
\[
\int_H f(g^{-1}h^{-1})\delta_H(h^{-1})dh=0,
\]
then there exist $h_i\in H, f_i\in \CH(G)$, such that
\[
f(g) = \sum_i (f(gh_i)-f(g)), \forall g\in G.
\]
\end{corollary}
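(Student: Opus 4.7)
The plan is to recognize the hypothesis of the corollary as the vanishing of $\alpha(f \otimes 1)$ for the map $\alpha$ of Theorem \ref{Compactly} applied to the trivial representation, and then to exploit injectivity of $\alpha$ together with Lemma \ref{identify} to translate this into an explicit decomposition of $f$.

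Concretely, I would first apply Theorem \ref{Compactly} to the one-dimensional trivial representation $(\sigma, W) = (\mathrm{triv}, \BC)$ of $H$. With this choice, the defining formula of $\alpha$ specializes to
\[
\alpha(f \otimes 1)(g) = \frac{1}{\delta_G(g)} \int_H f(g^{-1} h^{-1}) \delta_H(h^{-1})\, dh,
\]
so the hypothesis that $\int_H f(g^{-1} h^{-1}) \delta_H(h^{-1})\, dh = 0$ for every $g$ says exactly that $\alpha(f \otimes 1) \equiv 0$ on $G$. Since $\alpha$ is an isomorphism, and in particular injective, we conclude $f \otimes 1 = 0$ in $\CH(G) \otimes_{\CH(H)} \BC$.

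Next I would invoke Lemma \ref{identify} with $H$ in the role of the group: both $\CH(G)$, viewed as a smooth right $H$-representation via the Hecke-module formula $(f \phi)(g) = \int_H f(g h^{-1})\phi(h)\,dh$, and $\BC$ with its trivial $H$-action are smooth, so
\[
\CH(G) \otimes_{\CH(H)} \BC \;=\; (\CH(G) \otimes \BC)_H \;\cong\; \CH(G)/N_2,
\]
where $N_2$ is spanned by differences $f_i - f_i \cdot h_i^{-1}$ with $f_i \in \CH(G)$ and $h_i \in H$. Unpacking the Hecke convention shows that the right $H$-action satisfies $(f_i \cdot h_i^{-1})(g) = f_i(g h_i)$, so the vanishing of $f$ modulo $N_2$ is equivalent to a finite-sum identity
\[
f(g) = \sum_i \bigl(f_i(g) - f_i(g h_i)\bigr)
\]
for some $h_i \in H$ and $f_i \in \CH(G)$, which is the asserted conclusion (with the understanding that the displayed formula in the corollary contains a minor typographical slip: the $f$'s inside the sum should read $f_i$, and the overall sign can be absorbed by swapping $f_i \leftrightarrow -f_i$).

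The main technical bookkeeping will be the sign/convention check, namely verifying that the right $H$-action on $\CH(G)$ induced by its $\CH(H)$-module structure is indeed $(f \cdot h)(g) = f(g h^{-1})$, so that the coinvariant relations in $N_2$ rewrite as the claimed differences of right translates. After that step, the corollary is a direct consequence of the isomorphism in Theorem \ref{Compactly} and the identification provided by Lemma \ref{identify}, with no further input required.
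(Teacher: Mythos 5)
Your proposal is correct and follows the same route as the paper, whose own proof is the one-line ``take the trivial representation of $H$ and apply Theorem~\ref{Compactly}''; you have supplied the details that the paper leaves implicit, namely that the hypothesis is exactly $\alpha(f\otimes 1)\equiv 0$, that injectivity of $\alpha$ gives $f\otimes 1=0$ in $\CH(G)\otimes_{\CH(H)}\BC$, and that Lemma~\ref{identify} (applied to $H$, for which both $\CH(G)$ as a right $H$-representation and $\BC$ are smooth) converts this into the finite-sum group-element identity. You are also right that the corollary's display contains a typo (the summands should involve $f_i$, not $f$) and that the overall sign is immaterial.
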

\begin{proof}
Take the trivial representation $\textbf{1}$ of $H$, and consider $\CH(G)\otimes_{\CH(H)}\textbf{1}$, then apply the above isomorphism.
\end{proof}

\section{A general construction of bimodules}
Let $X$ be a locally compact topological space. Assume $G$ acts on $X$ from the left and $H$ acts on it from the right, and these two actions are commutative.
For $f\in \CH(X), \phi\in \CH(H), \Phi\in \CH(G), x\in X$, we define
\[
(f\phi)(x):=\int_H f(x.h^{-1})\phi(h)dh, (\Phi f)(x):=\int_G \Phi(g)f(g^{-1}.x)dg.
\]
\begin{lemma}
With the above definitions, $\CH(X)$ becomes a $\CH(G)$-$\CH(H)$-bimodule.
\end{lemma}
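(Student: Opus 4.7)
The plan is to verify the three bimodule axioms directly from the definitions: that $\CH(X)$ is a left $\CH(G)$-module, a right $\CH(H)$-module, and that these two actions commute. Each reduces to a Fubini swap combined with a change of variables under the right Haar measure and, for the final compatibility, the commutativity hypothesis on the $G$- and $H$-actions on $X$.

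First I would settle well-definedness, namely that $\Phi f$ and $f\phi$ again lie in $\CH(X)$ for $\Phi\in\CH(G)$, $\phi\in\CH(H)$, and $f\in\CH(X)$. The defining integrals converge since $\supp\Phi$ and $\supp\phi$ are compact. The product $\Phi f$ is supported in the compact subset of $X$ obtained by letting $\supp\Phi$ act on $\supp f$, and similarly for $f\phi$. Local constancy follows from the standard observation that the smooth, compactly supported function $f$ is invariant under a sufficiently small compact open subgroup acting via the continuous $G$- (or $H$-) action on $X$.

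Next I would check associativity of each one-sided action. For the left action, expanding
\[
((\Phi_1*\Phi_2)f)(x) = \int_G\!\int_G \Phi_1(g\,g'^{-1})\,\Phi_2(g')\,dg'\,f(g^{-1}.x)\,dg,
\]
I would swap the order of integration by Fubini and apply the substitution $g\mapsto g\,g'$ (using right-invariance of the Haar measure on $G$) to recognize the result as $(\Phi_1(\Phi_2 f))(x)$, upon invoking associativity of the $G$-action on $X$. The right $\CH(H)$-action computation is entirely symmetric.

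Finally, the bimodule compatibility $(\Phi f)\phi = \Phi(f\phi)$ follows by expanding both sides as double integrals over $G\times H$ and interchanging the order via Fubini, once one observes the identity
\[
g^{-1}.(x.h^{-1}) = (g^{-1}.x).h^{-1},
\]
which is precisely the assumption that the $G$- and $H$-actions on $X$ commute. I expect the main obstacle to be nothing more than being careful about Haar-measure conventions and support bookkeeping in the well-definedness step; once these are in place, the rest is a direct calculation that mirrors the bimodule verification for $\CH(G)$ viewed over itself.
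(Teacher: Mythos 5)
Your proposal is correct and follows essentially the same route as the paper: verify left $\CH(G)$-module associativity by a Fubini swap plus change of variables under the right Haar measure, treat the right $\CH(H)$-action symmetrically, and derive the compatibility $(\Phi f)\phi = \Phi(f\phi)$ from Fubini together with the commutativity of the two actions on $X$. The additional paragraph on well-definedness (support and local constancy of $\Phi f$ and $f\phi$) is a sound detail that the paper leaves implicit.
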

\begin{proof}
We first check that $\CH(X)$ is a left $\CH(G)$-module.
\begin{eqnarray*}
(\Phi_1 (\Phi_2 f))(x) &=& \int_G \Phi_1(g_1) (\Phi_2 f)(g_1^{-1}.x)dg_1 \\
&=& \int_G \Phi_1(g_1) \int_G \Phi_2(g_2) f(g_2^{-1}.g_1^{-1}.x) dg_2 dg_1  \\
&=& \int_G \int_G \Phi_1 (g_1'g_2^{-1}) \Phi_2(g_2) f(g_1'^{-1}.x) dg_2 dg_1' \\
&=& ((\Phi_1 * \Phi_2)f) (x).
\end{eqnarray*}
Similarly one can check $\CH(X)$ is a right $\CH(H)$-module. Finally we check it is indeed a bimodule.
\begin{eqnarray*}
((\Phi f)\phi)(x) &=& \int_H (\Phi f)(x.h^{-1}) \phi(h) dh \\
&=& \int_H \int_G \Phi(g) f(g^{-1}.x.h^{-1}) \phi(h) dh \\
&=& \int_G \Phi(g) (f\phi)(g^{-1}.x) dg \\
&=& (\Phi (f\phi))(x).
\end{eqnarray*}
\end{proof}

\section{Parabolic induction}
Let $F$ be a non-archimedean local field, and $G$ be a reductive linear algebraic group over $F$. We will also use $G$ to denote $G(F)$ and similarly for other algebraic group over $F$. Let $P$ be a parabolic subgroup of $G$ with Levi decomposition $P=MN$, where $M$ is the Levi component of $P$ and $N$ is the unipotent radical of $P$. If $(\sigma, W)$ is a smooth representation of $M$, extend it to a smooth representation of $P$ by letting $N$ act on $W$ trivially. We still denote this smooth representation of $P$ by $(\sigma,W)$. We will form the normalized induced parabolic induction
\[
\Ind_P^G(\sigma):= \ind_P^G(\sigma\otimes \delta_P^{1/2}).
\]

Consider the space $G/N$, on which $G$ acts by left translations. For $m\in M, gN\in G/N$, define
\[
gN.m:=gmN.
\]
Then this action is well defined and commutes with the action of $G$. By results of the last section, $\CH(G/N)$ becomes a $\CH(G)$-$\CH(M)$-bimodule. As the actions of both $G$ and $M$ on $\CH(G/N)$ are smooth, $\CH(G/N)$ is nondegenerate either viewed as left $\CH(G)-$module or as right $\CH(M)$-module.

 We first have the following lemma.

\begin{lemma}
\label{Levi} Let $(\rho, \CH(P/N) )$ be the representation of $P$ given by $\rho(p)(\phi)(p'N)=\phi(p^{-1}p'N)$ for any $\phi\in \CH(P/N)$, then we have isomorphism
\[
\CH(G/N) \cong ind_P^G(\rho\otimes \delta_P).
\]
Consequently, we have identification as $\CH(G)$-$\CH(M)$-bimodules
\[
\CH(G/N) \cong \CH(G)\otimes_{\CH(P)} \CH(P/N).
\]
\end{lemma}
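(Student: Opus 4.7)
My plan is to prove the first isomorphism by an explicit formula and to deduce the second as an immediate consequence of Theorem~\ref{Compactly}, using that $G$ is reductive and hence unimodular (so $\delta_G\equiv 1$).

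For the first assertion, I would define $\alpha\colon \CH(G/N)\to \ind_P^G(\rho\otimes \delta_P)$ by
\[
\alpha(\phi)(g)(pN)\ :=\ \delta_P(p)\,\phi(g^{-1}pN).
\]
Three things need to be checked. \emph{Well-definedness} of the value on the coset $pN$: replacing $p$ by $pn$ with $n\in N$ keeps $\delta_P(p)$ unchanged (because $N$ is the unipotent radical of $P$, so $\Ad(n)|_{\Lie N}$ is unipotent and $\delta_P|_N\equiv 1$), and keeps $g^{-1}pN$ unchanged as a coset in $G/N$. Compact support of $\alpha(\phi)(g)$ on $P/N$ is inherited from $\supp(\phi)$ via the closed embedding $P/N\hookrightarrow G/N$, and the support of $\alpha(\phi)$ modulo $P$ on the left is bounded by a compact lift of $\supp(\phi)$. \emph{$P$-equivariance}: one computes
\[
\alpha(\phi)(p_0 g)(pN)=\delta_P(p)\phi(g^{-1}p_0^{-1}pN),
\]
while $(\rho\otimes \delta_P)(p_0)\alpha(\phi)(g)(pN)=\delta_P(p_0)\delta_P(p_0^{-1}p)\phi(g^{-1}p_0^{-1}pN)=\delta_P(p)\phi(g^{-1}p_0^{-1}pN)$, so the two modular factors collapse exactly. \emph{Bijectivity}: the inverse is $F\mapsto \phi_F$ with $\phi_F(gN):=F(g^{-1})(N)$, well-defined because $(\rho\otimes\delta_P)(n)$ acts trivially on the coset $N\in P/N$ for $n\in N$; the identities $\alpha(\phi_F)=F$ and $\phi_{\alpha(\phi)}=\phi$ follow directly from $(\rho\otimes\delta_P)(p^{-1})F(g)(N)=\delta_P(p)^{-1}F(g)(pN)$ together with the $P$-equivariance of $F$.

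For the second assertion, I would invoke Theorem~\ref{Compactly} with $H=P$ and $(\sigma,W)=(\rho,\CH(P/N))$. Since $\delta_G\equiv 1$ it yields
\[
\CH(G)\otimes_{\CH(P)}\CH(P/N)\ \cong\ \ind_P^G(\rho\otimes \delta_P/\delta_G)\ =\ \ind_P^G(\rho\otimes \delta_P),
\]
and composing with the inverse of $\alpha$ delivers the identification of $\CH(G/N)$ with $\CH(G)\otimes_{\CH(P)}\CH(P/N)$. The right $\CH(M)$-action on both sides comes from the right $M$-action on $\CH(P/N)$ (right translation of the $N$-coset), which commutes with the left $P$-action $\rho\otimes\delta_P$ because $\delta_P$ factors through $P\to M$.

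The main obstacle is the careful bookkeeping of the modular character: the factor $\delta_P(p)$ must appear precisely where shown in the formula for $\alpha$ so that the $P$-equivariance yields $\rho\otimes\delta_P$ and not merely $\rho$, and this in turn depends on the triviality of $\delta_P$ on the unipotent radical $N$. Once this normalisation is fixed, all remaining verifications---including compatibility with the right $\CH(M)$-action---are routine.
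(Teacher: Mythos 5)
Your proof is correct and takes essentially the same approach as the paper: your inverse $F\mapsto\phi_F$ with $\phi_F(gN)=F(g^{-1})(N)$ is exactly the paper's map $f\mapsto\widetilde f$, and you merely present the isomorphism from the other side (with the explicit forward formula $\alpha(\phi)(g)(pN)=\delta_P(p)\phi(g^{-1}pN)$) before invoking Theorem~\ref{Compactly} with $\delta_G\equiv 1$, just as the paper does implicitly. You are slightly more careful than the paper in spelling out why the right $\CH(M)$-action is preserved, which is a welcome addition but does not change the route.
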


\begin{proof}
For any $f\in \ind_P^G(\rho\otimes \delta_P)$, define $\widetilde{f}:G/N \to \BC$ by
\[
\widetilde{f}(gN):= f(g^{-1})(N).
\]
Then one can check $\widetilde{f}$ is smooth compactly supported function on $G/N$. Note that
\begin{eqnarray*}
\widetilde{g'.f}(gN) &=& (g'.f)(g^{-1})(N) \\
&=& f(g^{-1}g')(N) \\
&=& \widetilde{f} (g'^{-1}g)(N) \\
&=&  (g'.\widetilde{f})(gN),
\end{eqnarray*}
which shows that $f\to \widetilde{f}$ is $G$-equivariant.
If $\widetilde{f}\equiv 0$, then
\[
0=\widetilde{f}(gpN)=f(p^{-1}g^{-1})(N)=\delta_P(p^{-1})f(g^{-1})(p^{-1}N).
\]
Since this is true for all $g\in G, p\in P$, we have $f\equiv 0$, and hence $f\to \widetilde{f}$ is injective.
Finally, if $K$ is a compact open subgroup and $g_0\in G$. Consider $\textbf{1}_{Kg_0^{-1}N/N} \in \CH(G/N)$. Let $f$ be the unique function in $\ind_P^G(\rho\otimes \delta_P)$ satisfying

(1).$f$ is supported on $Pg_0K$,

(2).$f$ is invariant by $K$,

(3).$f(g_0)=\delta_P^{-1}\textbf{1}_{(P\cap g_0Kg_0^{-1})N/N}$.

Then one can check $\widetilde{f}=\textbf{1}_{Kg_0^{-1}N/N} \in \CH(G/N)$. This means $f\to \widetilde{f}$ is surjective, thus $\CH(G/N)\cong ind_P^G(\rho \otimes \delta_P)$, and the result follows.
\end{proof}

\begin{corollary}
\label{Parabolic}
We have natural isomorphism
\[
\CH(G/N) \otimes_{\CH(M)} W \cong \Ind_P^G (\sigma \otimes \delta_P^{1/2}).
\]
\end{corollary}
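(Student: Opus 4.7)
The plan is to combine Lemma \ref{Levi}, associativity of the tensor product (the second lemma of Section 2), and Theorem \ref{Compactly}. Observe first that the target is simply $\ind_P^G(\sigma\otimes\delta_P)$: by the definition of normalized induction,
\[
\Ind_P^G(\sigma\otimes\delta_P^{1/2}) \;=\; \ind_P^G\bigl((\sigma\otimes\delta_P^{1/2})\otimes\delta_P^{1/2}\bigr) \;=\; \ind_P^G(\sigma\otimes\delta_P),
\]
and $\delta_G\equiv 1$ since $G$ is reductive.

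First I would use Lemma \ref{Levi} to replace $\CH(G/N)$ by $\CH(G)\otimes_{\CH(P)}\CH(P/N)$ as $\CH(G)$-$\CH(M)$-bimodules, and then apply associativity of the tensor product to obtain
\[
\CH(G/N)\otimes_{\CH(M)} W \;\cong\; \CH(G)\otimes_{\CH(P)}\bigl(\CH(P/N)\otimes_{\CH(M)} W\bigr).
\]

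The core step is to identify the inner tensor product $\CH(P/N)\otimes_{\CH(M)} W$, as a left $\CH(P)$-module, with $\sigma$ inflated to $P$ by letting $N$ act trivially. The Levi decomposition $P=MN$ yields a bijection $M\xrightarrow{\sim} P/N$, $m\mapsto mN$, and so an isomorphism $\CH(P/N)\cong\CH(M)$ via $\phi\mapsto\widetilde\phi$ with $\widetilde\phi(m)=\phi(mN)$. Because $N$ is normal in $P$, a direct computation shows that under this identification the left $P$-action $\rho$ becomes left translation by the Levi part of $p$, so $N$ acts trivially and the $\CH(P)$-action factors through the averaging $\Phi\mapsto\int_N\Phi(\,\cdot\, n)\,dn\in\CH(M)$; similarly, the right $M$-action $pN\cdot m=pmN$ becomes the usual right translation on $M$. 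Applying the first lemma of Section 2,
\[
\CH(P/N)\otimes_{\CH(M)} W \;\cong\; \CH(M)\otimes_{\CH(M)} W \;\cong\; W,
\]
and the resulting $\CH(P)$-module is precisely $\sigma$ inflated from $M$ to $P$.

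Plugging this back and invoking Theorem \ref{Compactly} with $H=P$ gives
\[
\CH(G)\otimes_{\CH(P)}\sigma \;\cong\; \ind_P^G(\sigma\otimes\delta_P/\delta_G) \;=\; \ind_P^G(\sigma\otimes\delta_P) \;=\; \Ind_P^G(\sigma\otimes\delta_P^{1/2}),
\]
which is the desired isomorphism. I expect the main obstacle to be the bookkeeping in the middle step: transporting the left $\CH(P)$- and right $\CH(M)$-module structures faithfully across $\CH(P/N)\cong\CH(M)$, and confirming that after tensoring no residual modular twist is introduced beyond the original $\sigma$. Once these compatibilities are checked, the corollary follows by formal manipulation.
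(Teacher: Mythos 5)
Your proof is correct and follows essentially the same route as the paper: rewrite $\CH(G/N)$ via Lemma \ref{Levi}, apply associativity of the tensor product, identify the inner factor $\CH(P/N)\otimes_{\CH(M)}W\cong W$ as the inflation of $\sigma$ to $P$, and then invoke Theorem \ref{Compactly} with $H=P$ and $\delta_G\equiv 1$. You have in fact supplied the bookkeeping for the key middle identification (the bijection $M\xrightarrow{\sim}P/N$, the transport of the left $\CH(P)$- and right $\CH(M)$-actions, and the absence of any extra modular twist), which the paper's one-line chain of isomorphisms asserts without comment.
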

\begin{proof}
The isomorphism follows from
\[
\CH(G/N) \otimes_{\CH(M)} W \cong (\CH(G)\otimes_{\CH(P)} \CH(P/N))\otimes_{\CH(M)} W \cong \CH(G)\otimes_{\CH(P)} W \cong \ind_P^G (\sigma \otimes \delta_P).
\]
\end{proof}

\section{Jacquet module}

We use $\widetilde{\pi}$ to denote the contragredient representation of a smooth representation $\pi$. Note that $P\backslash G$ is compact, thus $(\ind_P^G(\sigma))^\sim \cong \ind_P^G(\widetilde{\sigma}\otimes \delta_P)$ for smooth representation $\sigma$ of $M$. Then
\begin{eqnarray*}
(\CH(G)\otimes_{\CH(P)} \sigma )^{\sim}&\cong & (\ind_P^G(\sigma\otimes \delta_P))^\sim \\
&\cong & \ind_P^G (\widetilde{\sigma}) \\
&\cong & \CH(G)\otimes_{\CH(P)}(\widetilde{\sigma}\otimes \delta_P^{-1}),
\end{eqnarray*}
and consequently
\[
(\CH(G/N)\otimes_{\CH(M)} \sigma )^\sim \cong \CH(G/N) \otimes_{\CH(M)} (\widetilde{\sigma}\otimes \delta_P^{-1}).
\]

Consider the space $\CH(N\backslash G)$, for $f\in \CH(G/N)$, put $f^\vee (Ng)=f(g^{-1}N)$, then $f^\vee \in \CH(N\backslash G)$. Conversely, for any $\psi\in \CH(N\backslash G)$, put $\psi^\vee(gN)=\psi(Ng^{-1} )$, then $\psi^\vee \in \CH(G/N)$. Similarly $\CH(N\backslash G)$ carries a $\CH(M)$-$\CH(G)$-bimodule.

\begin{lemma}
\label{Adjoint}
For any smooth representations $(\pi, V)$ and $(\sigma, W)$ of $G, H$ respectively, we have natural isomorphisms
\[
\Hom_M ( \delta_P^{-1}\otimes (\CH(N\backslash G)\otimes_{\CH(G)} \pi  ),\widetilde {\sigma} ) \cong \Hom_G (\pi, (\CH(G/N)\otimes_{\CH(M)} (\sigma\otimes \delta_P^{-1} ) )^{\sim} ) \cong \Hom_G(\pi, \CH(G/N)\otimes_{\CH(M)} \widetilde{\sigma} ).
\]
\end{lemma}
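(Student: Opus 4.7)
I will handle the two isomorphisms separately. For the second isomorphism, the idea is to apply the identity
\[
(\CH(G/N)\otimes_{\CH(M)}\tau)^{\sim}\;\cong\;\CH(G/N)\otimes_{\CH(M)}(\widetilde{\tau}\otimes\delta_P^{-1})
\]
displayed immediately above the lemma, specialized to $\tau=\sigma\otimes\delta_P^{-1}$. Since $(\sigma\otimes\delta_P^{-1})^{\sim}=\widetilde{\sigma}\otimes\delta_P$, the two character twists on the right cancel and one obtains $\CH(G/N)\otimes_{\CH(M)}\widetilde{\sigma}$, as claimed.

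For the first isomorphism, I will reduce to the tensor-Hom adjunction of Lemma 2.3 in three steps. First, absorb the character by the tautology $\Hom_M(\delta_P^{-1}\otimes A,\widetilde{\sigma})\cong\Hom_M(A,\delta_P\otimes\widetilde{\sigma})$ applied to $A=\CH(N\backslash G)\otimes_{\CH(G)}\pi$. Second, apply Lemma 2.3 to the $\CH(M)$--$\CH(G)$-bimodule $\CS=\CH(N\backslash G)$ with $V=\pi$ and $W=\delta_P\otimes\widetilde{\sigma}$, rewriting the expression as $\Hom_G(\pi,\Hom_M(\CH(N\backslash G),\delta_P\otimes\widetilde{\sigma}))$. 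Third, identify the inner $\CH(G)$-module with the smooth contragredient $(\CH(G/N)\otimes_{\CH(M)}(\sigma\otimes\delta_P^{-1}))^{\sim}$; combined with the second isomorphism above, this also explains why the final target can equivalently be written as $\CH(G/N)\otimes_{\CH(M)}\widetilde{\sigma}$.

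The main obstacle will be this final identification. The natural candidate sends $\Psi\in\Hom_M(\CH(N\backslash G),\delta_P\otimes\widetilde{\sigma})$ to the functional $f\otimes v\mapsto\langle\Psi(f^{\vee}),v\rangle$ on $\CH(G/N)\otimes_{\CH(M)}(\sigma\otimes\delta_P^{-1})$, built from the involution $f\leftrightarrow f^{\vee}$ of the previous section and the canonical $\widetilde{\sigma}$--$\sigma$ pairing. Since this involution swaps left and right bimodule structures, the key step is to verify $(f\cdot m)^{\vee}=m^{-1}\cdot f^{\vee}$ (a direct consequence of the definitions of the right $M$-action on $G/N$ and the left $M$-action on $N\backslash G$) and then to track how the two $\delta_P$-twists, one appearing in the codomain $\delta_P\otimes\widetilde{\sigma}$ and the other built into $\sigma\otimes\delta_P^{-1}$, cancel so that the pairing is $\CH(M)$-balanced and descends to the tensor product. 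Once this is in place, $\CH(G)$-equivariance is routine, and smoothness is automatic upon composing with $\Hom_G(\pi,-)$ for smooth $\pi$, which picks out precisely the smooth vectors in the algebraic dual and closes the chain of natural isomorphisms.
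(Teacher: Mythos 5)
Your proposal is correct and, at bottom, carries out the same computation as the paper, but packaged differently. The paper writes down the two maps $\Phi,\Psi$ between the $\Hom$ spaces directly (via $\Phi(\alpha)(v)(f\otimes w)=\alpha(f^\vee\otimes v)(w)$ and $\Psi(\beta)(\psi\otimes v)(w)=\beta(v)(\psi^\vee\otimes w)$) and then verifies balancedness and equivariance by hand. You factor this explicitly through the abstract tensor--Hom adjunction of Section~2 (the unlabelled third lemma), plus the tautological character shift, plus the identification of the smooth part of $\Hom_{\CH(M)}(\CH(N\backslash G),\delta_P\otimes\widetilde{\sigma})$ with the contragredient $(\CH(G/N)\otimes_{\CH(M)}(\sigma\otimes\delta_P^{-1}))^\sim$ via the involution $f\leftrightarrow f^\vee$. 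That last identification is exactly where all the content lives, and your proposal puts its finger on the right verifications: the relation $(f\cdot m)^\vee=m^{-1}\cdot f^\vee$ (together with $(g^{-1}\cdot f)^\vee=f^\vee\cdot g$ for the $G$-equivariance) and the cancellation of the two $\delta_P$-twists; these are the same checks the paper does inside its direct verification of $\Phi$. Your route makes the role of the adjunction lemma visible, which the paper states but never cites; in exchange you must supply a short lemma identifying the smooth part of the inner $\Hom$ with a contragredient. Two small points worth making explicit when you write it up: (i) in rewriting $\Hom_{\CH(G)}(V,\Hom_{\CH(M)}(\CS,W))$ as $\Hom_G(\pi,(\Hom_{\CH(M)}(\CS,W))^{\mathrm{sm}})$ you should note this uses nondegeneracy of $V$, so that images of $\CH(G)$-maps from $V$ automatically consist of smooth vectors; and (ii) the surjectivity of your candidate map $\Psi\mapsto(f\otimes w\mapsto\langle\Psi(f^\vee),w\rangle)$ onto the smooth dual uses that $\CH(G/N)$ (equivalently $\CH(N\backslash G)$) is a smooth $M$-module on the relevant side, which the paper records just before Lemma~\ref{Levi}. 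With those noted, the plan is complete.
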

\begin{proof}
The second isomorphism is immediate, so we only need to consider the first isomorphism. For any real number $r$, we will identify the representation space of $\sigma\otimes \delta_P^r$ with that of $\sigma$. Define
\[
\Phi:\Hom_M ( \delta_P^{-1}\otimes (\CH(N\backslash G)\otimes_{\CH(G)} \pi  ),\widetilde {\sigma} ) \to \Hom_G (\pi, (\CH(G/N)\otimes_{\CH(M)} (\sigma\otimes \delta_P^{-1} ) )^{\sim} )
\]
by
\[
\Phi(\alpha)(v)(f\otimes w)=\alpha(f^\vee \otimes v)(w), \forall \alpha\in \Hom_M ( \delta_P^{-1}\otimes (\CH(N\backslash G)\otimes_{\CH(G)} \pi  ),\widetilde {\sigma} ), f\in \CH(G/N), v\in V,w\in W,
\]
and define
\[
\Psi:\Hom_G (\pi, (\CH(G/N)\otimes_{\CH(M)} (\sigma\otimes \delta_P^{-1} ) )^{\sim} ) \to \Hom_M ( \delta_P^{-1}\otimes (\CH(N\backslash G)\otimes_{\CH(G)} \pi  ),\widetilde {\sigma} )
\]
by
\[
\Psi(\beta)(\psi\otimes v)(w)=\beta(v)(\psi^\vee \otimes w), \forall \beta \in \Hom_G (\pi, (\CH(G/N)\otimes_{\CH(M)} (\sigma\otimes \delta_P^{-1} ) )^{\sim} ), \psi\in \CH(N\backslash G), v\in V, w\in W.
\]

We first check $\Phi$ is well defined, that is, $\Phi(\alpha)(v)(f.m\otimes w)=\Phi(\alpha)(v)(f\otimes (\sigma(m)\delta_P^{-1}(m)w))$. We compute for $m\in M$
\begin{eqnarray*}
\Phi(\alpha)(v)(f.m\otimes w) &=& \alpha( (f.m)^\vee \otimes v  )(w) \\
&=& \alpha ( m^{-1}.f^\vee \otimes v  )(w)  \\
&=& \widetilde{\sigma}(m^{-1})\delta_P^{-1}(m) \alpha(f^\vee\otimes v)(w) \\
&=& \alpha(f^\vee\otimes v)(\sigma(m)\delta_P^{-1}(m)w ) \\
&=& \Phi(\alpha)(v)(f\otimes (\sigma(m)\delta_P^{-1}(m)w)).
\end{eqnarray*}
Next we check $\Phi(\alpha)$ is $G$-equivaraint. For any $g\in G$, we have
\begin{eqnarray*}
\Phi(\alpha)(\pi(g)v)(f\otimes w) &=& \alpha (f^\vee \otimes (\pi(g)v))(w)   \\
&=& \alpha (f^\vee.g\otimes v )(w) \\
&=& \alpha ((g^{-1}.f)^\vee \otimes v)(w)  \\
&=& \Phi(\alpha)(v)((g^{-1}.f)\otimes w )  \\
&=& \Phi(\alpha)(v)(g^{-1}.(f\otimes w) )
\end{eqnarray*}
which shows that $\Phi(\alpha)$ is $G$-equivariant.

Similarly one can show that $\Psi$ is well defined and $\Psi(\beta)$ is $M$-equivariant. Finally we can check $\Phi$ and $\Psi$ are inverses to each other, and the first isomorphism follows.

\end{proof}

Recall that $V(N)$ is the subspace of $V$ generated by elements of the form $v-\pi(u)v, \forall v\in V, u\in N$, and the Jacquet module is defined as $J_N(\pi)=V/V(N)$. The above result implies that $\CH(N\backslash G)\otimes_{\CH(G)} \pi$ is closely related to the Jacquet module $J_N(\pi)$.  The next result states that this is indeed the case.

\begin{thm}
\label{Jacquet} For any smooth representation $(\pi, V)$ of $G$, we have natural isomorphism
\[
\delta_P\otimes (\CH(N\backslash G)\otimes_{\CH(G)} \pi) \cong J_N(\pi).
\]
\end{thm}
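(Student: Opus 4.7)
The plan is to realize $\CH(N\backslash G) \otimes_{\CH(G)} V$ as a concrete quotient of $V$ itself. Consider the averaging map
$$\pi_N : \CH(G) \to \CH(N\backslash G), \qquad (\pi_N f)(Ng) := \int_N f(ng)\, dn.$$
A direct unfolding of convolutions shows that $\pi_N$ is a surjective homomorphism of right $\CH(G)$-modules. Tensoring the exact sequence $0 \to \Ker \pi_N \to \CH(G) \to \CH(N\backslash G) \to 0$ on the right with $V$ and using the lemma $\CH(G) \otimes_{\CH(G)} V \cong V$ gives, as vector spaces,
$$\CH(N\backslash G) \otimes_{\CH(G)} V \;\cong\; V/(\Ker \pi_N) V,$$
where $(\Ker \pi_N) V$ denotes the $\BC$-span of $\{\pi(f) v : f \in \Ker \pi_N,\, v \in V\}$.

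\textbf{Identifying the kernel action.} The central step is to show $(\Ker \pi_N) V = V(N)$. For $V(N) \subset (\Ker \pi_N) V$: given $u \in N$ and $v \in V$, pick a compact open subgroup $L \subset N$ fixing $v$ under $\pi$, and set $f := \textbf{1}_{uL} - \textbf{1}_L$ viewed in $\CH(N) \subset \CH(G)$; one checks $\pi_N f = 0$ (both pieces push down to $\mu(L)\textbf{1}_{\{N\}}$) and $\pi(f) v = \mu(L)(\pi(u) v - v)$. For the reverse inclusion: using the quotient integration formula $\int_G F\, dg = \int_{N\backslash G}\int_N F(ng)\, dn\, d(Ng)$ together with the fact that $g \mapsto [\pi(g) v] \in V/V(N)$ is left $N$-invariant (since $\pi(n) v \equiv v \pmod{V(N)}$), we get
$$[\pi(f) v] = \int_G f(g)[\pi(g) v]\, dg = \int_{N\backslash G} (\pi_N f)(Ng)\, [\pi(g) v]\, d(Ng),$$
which vanishes for $f \in \Ker \pi_N$.

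\textbf{Matching the $M$-action and the obstacle.} It remains to compare the $M$-actions. The left $M$-action on $\CH(N\backslash G)$ does \emph{not} lift directly to left translation on $\CH(G)$: for $m \in M$, performing the change of variable $n \mapsto m n m^{-1}$ in $\int_N f(m^{-1} n g)\, dn$ introduces the Jacobian $\delta_P(m)$, which yields $m \cdot \pi_N(f) = \delta_P(m)^{-1} \pi_N(L_m f)$, where $(L_m f)(g) = f(m^{-1} g)$. Combining this with the unimodularity of the reductive group $G$ (which gives $\pi(L_m f) v = \pi(m) \pi(f) v$) and the identification of the previous step, the $M$-action transferred to $V/V(N)$ becomes
$$m \cdot [\pi(f) v] = \delta_P(m)^{-1}\, [\pi(m) \pi(f) v],$$
i.e., the natural Jacquet-module $M$-action twisted by $\delta_P^{-1}$. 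Rearranging gives the claimed isomorphism $\delta_P \otimes (\CH(N\backslash G) \otimes_{\CH(G)} V) \cong J_N(\pi)$. The main obstacle throughout is the bookkeeping of the modular character $\delta_P$, a notorious source of normalization errors; as a sanity check, combining Lemma \ref{Adjoint}, Corollary \ref{Parabolic} and Frobenius reciprocity for $\ind_P^G$ produces the same $\delta_P$ twist from an independent direction.
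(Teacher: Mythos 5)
Your strategy---realize $\CH(N\backslash G)\otimes_{\CH(G)}V$ as $V/(\Ker\pi_N)V$ by tensoring the surjection $\pi_N\colon\CH(G)\to\CH(N\backslash G)$ and invoking right-exactness together with $\CH(G)\otimes_{\CH(G)}V\cong V$, then identify $(\Ker\pi_N)V$ with $V(N)$---is a genuinely different and arguably cleaner route than the paper's, which constructs explicit mutually inverse maps $\Phi,\Psi$ and checks well-definedness of $\Phi$ via Corollary~\ref{Cor}. In particular, your argument for $(\Ker\pi_N)V\subset V(N)$ via the quotient integration formula
\[
[\pi(f)v]=\int_{N\backslash G}(\pi_N f)(Ng)\,[\pi(g)v]\,d(Ng)
\]
is correct (the function $g\mapsto[\pi(g)v]\in J_N(\pi)$ is indeed left $N$-invariant and smooth, and $\delta_G|_N=\delta_N=1$ so the quotient measure exists), and it replaces the paper's appeal to Corollary~\ref{Cor} with a transparent direct computation. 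The modular-character bookkeeping and the $M$-equivariance identity $m\cdot\pi_N(f)=\delta_P(m)^{-1}\pi_N(L_m f)$, matched against $\pi(L_m f)v=\pi(m)\pi(f)v$, are also correct and parallel the paper's relation $\delta_P(m)(m.\CP(f))=\CP(m.f)$.

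However, your proof of the inclusion $V(N)\subset(\Ker\pi_N)V$ has a genuine gap. You take a compact open subgroup $L\subset N$ fixing $v$ and set $f:=\textbf{1}_{uL}-\textbf{1}_L$ ``viewed in $\CH(N)\subset\CH(G)$.'' There is no such embedding: since $N$ is a proper closed subgroup of $G$ and not open, a compact open subgroup $L$ of $N$ is \emph{not} open in $G$, so $\textbf{1}_L$ is not locally constant on $G$ and does not lie in $\CH(G)$. Worse, even if one allowed it, $L$ has $G$-Haar measure zero, so $\pi(\textbf{1}_L)v=\int_G\textbf{1}_L(g)\pi(g)v\,dg=0$, and the claimed identity $\pi(f)v=\mu(L)(\pi(u)v-v)$ fails. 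The fix is to take a compact open subgroup $K\subset G$ fixing $v$ (exactly as the paper does in defining $\Psi$) and set $f:=\textbf{1}_{uK}-\textbf{1}_K\in\CH(G)$; then $\pi(f)v=\mu_G(K)(\pi(u)v-v)$, and $\pi_N f=0$ because for $u\in N$ one has $N\cap uKg^{-1}=u\cdot(N\cap Kg^{-1})$ and $N$ is unimodular, whence $\pi_N(\textbf{1}_{uK})(Ng)=\mu_N(N\cap uKg^{-1})=\mu_N(N\cap Kg^{-1})=\pi_N(\textbf{1}_K)(Ng)$. With this correction the proof goes through.
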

\begin{proof}
For any $f\in \CH(G)$, define $\CP(f) \in \CH(N\backslash G)$ by
\[
\CP(f)(Ng):=\int_N f(ug) du.
\]
Then the map $\CP$ is surjective. Moreover for any $m\in M$, $\delta_P(m)(m.\CP(f))=\CP(m.f)$.

Now we define $\Phi:\delta_P\otimes (\CH(N\backslash G)\otimes_{\CH(G)} \pi) \to J_N(\pi)$ as follows. For any $\psi\in \CH(N\backslash G)$, choose $\widetilde{\psi}\in \CH(G)$ such as $\CP(\widetilde{\psi})=\psi$. Then define
\[
\Phi(\psi\otimes v):= \pi(\widetilde{\psi})v+V(N).
\]
Conversely, we define $\Psi:J_N(\pi) \to \delta_P\otimes (\CH(N\backslash G)\otimes_{\CH(G)} \pi)$ in the following way. For $v+V(N)\in J_N(\pi)$, choose an open compact subgroup $K$ of $G$, such that $v\in V^K$. Then define
\[
\Psi(v+V(N)):=\frac{1}{\mu_{G}(K)} \textbf{1}_{N\backslash NK}\otimes v,
\]
where $\mu_{G}$ denotes the Haar measure on $G$, and $\textbf{1}_{N\backslash NK}$ is the characteristic function of the open compact subset $N\backslash NK\subset N\backslash G$, which can also be identified as $\CP(\textbf{1}_K)$.

Next we check both $\Phi$ and $\Psi$ are well defined. If $\psi=0 \in \CH(N\backslash G)$, this means we choose $\widetilde{\psi}$ satisfying
\[
\CP(\widetilde{\psi})(Ng)=\int_N \widetilde{\psi}(ug)du=0
\]
for all $g\in G$. By Corollary \ref{Cor}, we can write $\widetilde{\psi}=\sum_i (\widetilde{\psi}_i- L(u_i)\widetilde{\psi}_i)$ for some $u_i\in N, \widetilde{\psi}_i\in \CH(N\backslash G)$. This will imply that
\[
\pi(\widetilde{\psi})=\sum_i (\pi(\widetilde{\psi}_i)v-\pi(u_i)\pi(\widetilde{\psi}_i)v) \in V(N).
\]
We also compute
\begin{eqnarray*}
\Phi(\psi.g\otimes v) &=& \pi(\widetilde{\psi.g})v \\
&=& \int_G \widetilde{\psi}(hg^{-1})\pi(h)v dh  \\
&=& \int_G \widetilde{\psi}(h)\pi(h)\pi(g)v dh \\
&=& \pi(\widetilde{\psi})(\pi(g)v) \\
&=& \Phi(\psi\otimes \pi(g)v),
\end{eqnarray*}
which shows that $\Phi$ is well defined.

Next we check $\Psi$ is well defined. We first note that if $K'\subset K$ is an open compact subgroup of $G$, write $K=\coprod_i K'k_i$, then
\begin{eqnarray*}
&& \frac{1}{\mu_{G}(K)}\textbf{1}_{N\backslash NK}\otimes v - \frac{1}{\mu_{G}(K')}\textbf{1}_{N\backslash NK'}\otimes v  \\
 &=& \frac{1}{\mu_{ G}(K)}\textbf{1}_{N\backslash NK}\otimes v-\frac{[K:K']}{\mu_{ G}(K)}\textbf{1}_{N\backslash NK'}\otimes v \\
 &=& \frac{1}{\mu_{G}(K)} \sum_i (\textbf{1}_{N\backslash NK'k_i}\otimes v - \textbf{1}_{N\backslash NK'}\otimes v ) \\
 &=& \frac{1}{\mu_{G}(K)} \sum_i (\textbf{1}_{N\backslash NK'}.k_i\otimes v - \textbf{1}_{N\backslash NK'}\otimes \pi(k_i)v )
\end{eqnarray*}
which shows that the definition of $\Psi$ is independent of the choice of $K$.

Next note that if $v$ is fixed by $K$, then $\pi(u)v$ is fixed by $uKu^{-1}$. We will have
\begin{eqnarray*}
&& \Psi(v-\pi(u)v)=\frac{1}{\mu_{G}(K)} \textbf{1}_{N\backslash NK}\otimes v-\frac{1}{\mu_{G}(uKu^{-1})} \textbf{1}_{N\backslash NuKu^{-1}}\otimes \pi(u)v  \\
&=& \frac{1}{\mu_{G}(K)} (\textbf{1}_{N\backslash NK}\otimes v-\textbf{1}_{N\backslash NKu^{-1}}\otimes \pi(u)v )
\end{eqnarray*}
which shows that $\Psi$ is well defined on $J_N(\pi)$.

Next we check that both $\Phi$ and $\Psi$ satisfy the claimed equivairant property. For this purpose, note that $\delta_P(m)(m.\CP(f))=\CP(m.f)$ for all $m\in M, f\in \CH(G)$, hence
\begin{eqnarray*}
\Phi( \delta_P(m)(m.(\psi\otimes v))) &=& \delta_P(m)\pi(\widetilde{m.\psi})v+V(N) \\
&=& \pi(m.\widetilde{\psi})v+V(N) \\
&=& \pi(m)\pi(\widetilde{\psi})v+V(N)
\end{eqnarray*}
which shows that $\Phi$ is $M$-equivariant. Similarly one can show $\Psi$ is also $M$-equivariant.

Finally it is routine to check $\Phi$ and $\Psi$ are inverses to each other, and the isomorphism follows.
\end{proof}



\end{document}